\numberwithin{equation}{section}
\newtheorem{thm}[equation]{Theorem}
\newtheorem{prop}[equation]{Proposition}
\newtheorem{cor}[equation]{Corollary}
\newtheorem{lem}[equation]{Lemma}
\theoremstyle{definition}
\newtheorem{exmp}[equation]{Example}
\newtheorem{rem}[equation]{Remark}
\theoremstyle{plain}
\renewcommand{\phi}{\varphi}
\renewcommand{\dim}{\operatorname{\mathsf{dim}}}
\newcommand\cchar{\operatorname{\mathsf{char}}}
\newcommand\N{\operatorname{\mathsf{N}}}
\newcommand{\llangle}{\langle\!\langle}
\renewcommand{\leq}{\leqslant}
\renewcommand{\geq}{\geqslant}
\begin{document}
\title[Essential dimension of sequences of quadratic Pfister forms]{Essential dimension of sequences of quadratic Pfister forms}
	
\date{October 25, 2025}

\author{Fatma Kader B\.{i}ng\"{o}l}
\author{Adam Chapman}
\author{Ahmed Laghribi}

\address{Scuola Normale Superiore, Piazza dei Cavalieri 7, 56126 Pisa, Italia}
\email{fatmakader.bingol@sns.it}

\address{School of Computer Science, Academic College of Tel-Aviv-Yaffo, Rabenu Yeruham St., P.O.B 8401, Yaffo, 6818211, Israel}
\email{adam1chapman@yahoo.com}

\address{Univ. Artois, UR 2462, Laboratoire de Math{\'e}matiques de Lens (LML), F-62300 Lens, France}
\email{ahmed.laghribi@univ-artois.fr}

\begin{abstract}
    We study the essential dimension of the set of isometry classes of $m$-tuples $(\varphi_1,...,\varphi_m)$ of quadratic $n$-fold Pfister forms over a field $F$ such that the Witt class of $\varphi_1 \perp \ldots \perp \varphi_m$ lies in $I_q^{n+1}F$.
    We show that the essential dimension is equal to $n+1$, when $m=3$, and is either $4$ or $5$, when $n=2=\cchar k$ and $m=4$.
		
    \medskip\noindent
    {\sc Classification} (MSC 2020): 11E04, 16K20
		
    \medskip\noindent
    {\sc{Keywords:}} essential dimension, quadratic form, quaternion algebra, characteristic $2$
\end{abstract}
	
\maketitle

\section{Introduction}
Throughout this short note, let $k$ be fixed field. Let $m, n\geq 1$ be integers. We denote by $I_q^{n}F$ the subgroup of the Witt group of quadratic forms of the field $F$ which is generated by the Witt classes of forms similar to quadratic $n$-fold Pfister forms over $F$.
Let $\mathcal{F}_{n,m}$ be the functor from the category of fields containing $k$ to the category of sets that assigns to a given field $F$ the set of isometry classes of $m$-tuples $(\varphi_1,\ldots,\varphi_m)$ of quadratic $n$-fold Pfister forms such that the Witt class of $\varphi_1 \perp \ldots \perp \varphi_m$ lies in $I_q^{n+1}F$.
The essential dimension $\operatorname{ed}(S)$ of such an $m$-tuple $S=(\varphi_1,\ldots,\varphi_m)$ is the minimal transcendence degree of a field $L$ containing $k$ and contained in $F$ such that there exist quadratic $n$-fold Pfister forms $\psi_1,\ldots,\psi_m$ over $L$ with $(\psi_i)_F \simeq \varphi_i$ for $1\leq i\leq m$ and the Witt class of $\psi_1 \perp \ldots \perp \psi_m$ lies in $I_q^{n+1}L$.
The essential dimension $\operatorname{ed}(\mathcal{F}_{n,m})$ of $\mathcal{F}_{n,m}$ is defined to be the supremum on $\operatorname{ed}(S)$, where $S=(\varphi_1,\dots,\varphi_m)$ ranges over all $m$-tuples of $n$-fold Pfister forms such that the Witt class of $\varphi_1  \perp \ldots \perp \varphi_m$ lies in $I_q^{n+1}F$  and $F$ ranges over all fields containing $k$.

For $n=2$, this coincides with the essential dimension of the set of isomorphism classes of $m$-tuples of quaternion algebras with the trivial tensor product, studied in \cite{CR15}. 
There, the essential dimension is computed to be $3$ for $m=3$ when $\operatorname{char}k\neq 2$. 
In \cite{Chap24}, it is shown that for $m=3$ the same value is attained in characteristic $2$, that is $\operatorname{ed}(\mathcal{F}_{2,3})=3$.

In this note, we show that $\operatorname{ed}(\mathcal{F}_{n,3})=n+1$ for any $n\geq 2$, and specifically, $\operatorname{ed}(S)=n+1$ if and only if $\varphi_1\perp-\varphi_2\perp\varphi_3$ is not hyperbolic where $S=(\varphi_1,\varphi_2,\varphi_3)$ such that the Witt class of $\varphi_1\perp-\varphi_2\perp\varphi_3$ lies in $I_q^{n+1}F$.
Furthermore, we prove that $\operatorname{ed}(\mathcal{F}_{2,4})\in \{4,5\}$ when $\cchar k=2$.

Our main references are \cite{EKM08} for the theory of quadratic forms, and \cite{Alb39} for the theory of central simple algebras.

\section{The essential dimension of $\mathcal{F}_{n,3}$}
Let $F$ be a field.
For $a_1,\ldots,a_n\in F^{\times}$, let $\langle a_1,\ldots,a_n\rangle_{\mathfrak{b}}$ denote the non-degenerate symmetric bilinear form 
$$F^n\times F^n\to F,\, ((x_1,\ldots,x_n),(y_1,\ldots,y_n))\mapsto\sum_{i=1}^{n}a_ix_iy_i.$$
Let $K$ be an étale quadratic $F$-algebra. Then 
\begin{equation*}
    K\simeq
\begin{cases}
    F[X]/(X^2-a)\quad \text{for $a\in F^{\times}$\quad if}\quad \cchar F\neq2, \\
    F[X]/(X^2+X+a)\quad \text{for $a\in F$\quad if}\quad \cchar F=2.
\end{cases}
\end{equation*} 
We denote by $\llangle a]]$ the norm form of $K/F$. We have that 
\begin{equation*}
    \llangle a]]=
\begin{cases}
    \langle1,-a\rangle\quad \text{if}\quad \cchar F\neq2 \\
    [1,a]\quad \text{if}\quad \cchar F=2,
\end{cases}
\end{equation*} 
where for $a,b\in F$, we denote by $[a,b]$ the $2$-dimensional nonsingular quadratic form $aX^2+XY+bY^2$.

For $n\geq 2$, the quadratic $n$-fold Pfister form is a nonsingular quadratic form isometric to
$$\langle1,-a_1\rangle_{\mathfrak{b}}\otimes\ldots\otimes\langle1,-a_{n-1}\rangle_{\mathfrak{b}}\otimes\llangle a_n]]$$
where $a_1,\ldots,a_{n-1}\in F^{\times}$ and $a_n\in F$ with $a_n\neq0$ if $\cchar F\neq2$. This form is denoted by $\llangle a_1,\ldots, a_{n-1},a_n]]$. 
A quadratic $1$-fold Pfister form is isometric to $\llangle a]]$ for some $a\in F$ with $a\neq0$ if $\cchar F\neq2$.

An $F$-quaternion algebra is a central simple $F$-algebra of degree $2$.  
For $a\in F$ and $b\in F^{\times}$, we denote by $[a,b)_F$ the $F$-quaternion algebra generated by two elements $u$ and $v$ satisfying the relations 
\[u^2-u=a,\ v^2=b\mbox{ and }vu=(1-u)v.\]
Any $F$-quaternion algebra is isomorphic to $[a,b)_F$ for certain $a\in F$ and $b\in F^{\times}$.
Let $Q$ be an $F$-quaternion algebra. We denote by $n_Q$ the reduced norm form of $Q$, which is a quadratic $2$-fold Pfister form.
It is well known that two quaternion algebras are isomorphic if and only if their reduced norm forms are isometric (see \cite[Cor. 12.5]{EKM08}).\\

Let $F$ be a field containing $k$. Let $S=(\varphi_1,\varphi_2,\varphi_3) \in \mathcal{F}_{n,3}(F)$.
By \cite[Prop. 24.5]{EKM08}, we have $\varphi_1\simeq\llangle b_1,a_1,\dots,a_{n-1}]]$, $\varphi_2\simeq\llangle b_2,a_1,\dots,a_{n-1}]]$ and $\varphi_3\simeq\llangle b_1b_2,a_1,\dots,a_{n-1}]]$ for some $b_1,b_2,a_1,\dots,a_{n-2}\in F^{\times}$ and $a_{n-1}\in F$ with $a_{n-1}\neq0$ if $\cchar F\neq2$. 
Therefore $\operatorname{ed}(S)\leq n+1$.
This implies that $\operatorname{ed}(\mathcal{F}_{n,3})\leq n+1$ as well. 
We will show that this is an equality (Corollary \ref{ed(n,3)}).

For $m, n\geq 1$ integers %$n,m\in\mathbb{N}$ 
and for $S=(\varphi_1,\ldots,\varphi_m)\in\mathcal{F}_{n,m}(F)$, we let $$\Sigma_S=\varphi_1\perp-\varphi_2\perp\ldots\perp(-1)^{m-1}\varphi_m$$
and note that the Witt class of $\Sigma_S$ lies in $I_q^{n+1}F$.

\begin{thm}\label{rel-ed(S)-hyperS}
    Let $S=(\varphi_1,\varphi_2,\varphi_3) \in \mathcal{F}_{n,3}(F)$.
    Then, $\operatorname{ed}(S)\leq n$ if and only if $\Sigma_S$ is hyperbolic.
\end{thm}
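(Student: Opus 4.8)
The plan is to collapse the whole problem onto the single $(n+1)$-fold Pfister form hiding inside $\Sigma_S$. Starting from the common-slot presentation $\varphi_1\simeq\llangle b_1,a_1,\dots,a_{n-1}]]$, $\varphi_2\simeq\llangle b_2,a_1,\dots,a_{n-1}]]$, $\varphi_3\simeq\llangle b_1b_2,a_1,\dots,a_{n-1}]]$ recalled above, I would set $\rho=\llangle a_1,\dots,a_{n-1}]]$, write $\varphi_i=\langle 1,-b_i\rangle_{\mathfrak b}\otimes\rho$ with $b_3=b_1b_2$, and compute the Witt class of $\Sigma_S$ in the $W(F)$-module structure of the Witt group of quadratic forms. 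A short manipulation valid in every characteristic (using $\langle 1,-1\rangle_{\mathfrak b}=0$) gives $\langle 1,-b_1\rangle_{\mathfrak b}-\langle 1,-b_2\rangle_{\mathfrak b}+\langle 1,-b_1b_2\rangle_{\mathfrak b}=\langle 1,b_2\rangle_{\mathfrak b}\otimes\langle 1,-b_1\rangle_{\mathfrak b}$ in $W(F)$, hence
\[
[\Sigma_S]=\langle 1,b_2\rangle_{\mathfrak b}\otimes\langle 1,-b_1\rangle_{\mathfrak b}\otimes[\rho]=[\pi],\qquad \pi:=\llangle -b_2,b_1,a_1,\dots,a_{n-1}]].
\]
Since a quadratic Pfister form is hyperbolic exactly when it is isotropic, the statement reduces to: $\operatorname{ed}(S)\leq n$ if and only if this one $(n+1)$-fold Pfister form $\pi$ is hyperbolic.

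For the ``if'' direction I would argue by descent. Writing $\pi=\langle 1,-b_2\rangle_{\mathfrak b}\otimes\varphi_1$, hyperbolicity of $\pi$ is equivalent to $-b_2$ lying in the group $G(\varphi_1)$ of similarity factors of $\varphi_1$ (equivalently $-b_2\in D(\varphi_1)$ when $\varphi_1$ is anisotropic; if $\varphi_1$ is isotropic it is hyperbolic and the triple degenerates, so $\operatorname{ed}(S)$ is even smaller). This single representation relation ties $b_2$ to the remaining data $b_1,a_1,\dots,a_{n-1}$, so I would use it to eliminate one of the $n+1$ slot-parameters and exhibit a field $L$ with $k\subseteq L\subseteq F$, $\operatorname{trdeg}_k L\leq n$, over which $\rho$, $\varphi_1$, $\varphi_2$, and hence $\varphi_3$, are defined and still satisfy $\psi_1\perp\psi_2\perp\psi_3\in I_q^{n+1}L$; this yields $\operatorname{ed}(S)\leq n$. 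Making the elimination explicit and uniform across the two characteristics is the technical, but essentially routine, content of this direction.

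For the ``only if'' direction, which is the crux, I would combine the degree-$(n+1)$ cohomological invariant with incompressibility of symbols. If $S$ descends to $(\psi_1,\psi_2,\psi_3)$ over $L$ with $\operatorname{trdeg}_k L\leq n$, then, base change being compatible with the formation of $\Sigma$, the form $\pi$ descends to the $(n+1)$-fold Pfister form attached to $\Sigma_{(\psi_1,\psi_2,\psi_3)}$ over $L$, call it $\pi_L$, with $(\pi_L)_F\simeq\pi$. By the norm-residue isomorphism (Kato's theorem in characteristic $2$) the invariant $e_{n+1}$ is injective on $I_q^{n+1}/I_q^{n+2}$, and a non-hyperbolic $(n+1)$-fold Pfister form is anisotropic and, by the Arason--Pfister Hauptsatz, nonzero in $I_q^{n+1}/I_q^{n+2}$; hence $\pi_L$ is hyperbolic if and only if $e_{n+1}(\pi_L)=0$. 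The heart of the matter is to show that this class must vanish because a nonzero degree-$(n+1)$ symbol cannot be defined over a field of transcendence degree $\leq n$ over $k$: this incompressibility is precisely the lower bound forcing $\operatorname{ed}(S)\geq n+1$ once $\pi$ is anisotropic, and I expect it to be the main obstacle. Granting it, $e_{n+1}(\pi_L)=0$, so $\pi_L$ and therefore $\pi$ are hyperbolic, i.e.\ $\Sigma_S$ is hyperbolic, which closes the equivalence.
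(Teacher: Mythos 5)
Your reduction of the statement to the single $(n+1)$-fold Pfister form $\pi$ with $[\Sigma_S]=[\pi]$ is exactly the paper's starting point, and your ``if'' direction, though left at the level of ``essentially routine'', is indeed recoverable and is essentially the paper's argument: from the representation relation one rescales $b_1,b_2$ by nonzero values of $\rho=\llangle a_1,\dots,a_{n-1}]]$ (values of a Pfister form are similarity factors, by \cite[Cor. 6.13 and Lemma 15.5]{EKM08}, in both characteristics) to reach $b_2'=b_1'-1$, so that $L=k(a_1,\dots,a_{n-1},b_1')$ has transcendence degree at most $n$ and carries the whole triple; you would still need the degenerate cases ($\varphi_1$ or $\varphi_2$ isotropic, or the relation having a vanishing coefficient, i.e. $\varphi_1\simeq\varphi_2$), which the paper treats separately. (Also note a sign slip: by your own displayed identity, $\pi=\langle 1,b_2\rangle_{\mathfrak b}\otimes\varphi_1$, not $\langle 1,-b_2\rangle_{\mathfrak b}\otimes\varphi_1$; your conclusion $-b_2\in G(\varphi_1)$ is the right one.)

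The genuine gap is in the ``only if'' direction. After correctly descending to an $(n+1)$-fold Pfister form $\pi_L$ over $L$ with $\operatorname{trdeg}_kL\leq n$ and $(\pi_L)_F\simeq\pi$, you do not prove that $\pi_L$ must be hyperbolic: you explicitly grant the claim that a nonzero degree-$(n+1)$ symbol cannot live over a field of transcendence degree $\leq n$ over $k$, and you even identify it as ``precisely the lower bound'' being proved --- so the argument assumes the hard direction of the theorem at exactly the crucial point. Moreover, this claim cannot simply be granted for an arbitrary base field: over $k=\mathbb{R}$ or $k=\mathbb{Q}$ the form $\llangle -1,\dots,-1]]$ is an anisotropic $(n+1)$-fold Pfister form over a field of transcendence degree $0$, so some hypothesis on $k$ is unavoidable. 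What closes the gap --- and is all the paper uses --- is the Tsen--Lang theorem \cite[Chap. 2, Cor. 15.3]{Schar85}: over a field of transcendence degree $\leq n$ over the (implicitly algebraically closed) base $k$, every quadratic form of dimension $>2^n$ is isotropic; applied to the $2^{n+1}$-dimensional form $\pi_L$ this gives isotropy, hence hyperbolicity since $\pi_L$ is Pfister, hence $\Sigma_S$ is hyperbolic. In particular your detour through $e_{n+1}$, Kato's theorem and the Arason--Pfister Hauptsatz is unnecessary: the needed input is a $u$-invariant bound, not a cohomological incompressibility statement.
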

\begin{proof}
    We have $\varphi_1\simeq\llangle b_1,a_1,\dots,a_{n-1}]]$, $\varphi_2\simeq\llangle b_2,a_1,\dots,a_{n-1}]]$ and $\varphi_3\simeq\llangle b_1b_2,a_1,\dots,a_{n-1}]]$ for some $b_1,b_2,a_1,\dots,a_{n-2}\in F^{\times}$ and $a_{n-1}\in F$.
    Then the Witt class of $\Sigma_S$ is given by $\llangle b_1,-b_2,a_1,\dots,a_{n-1}]]$. 
    Assume that $\operatorname{ed}(S)\leq n$. 
    Then there exists a field $L$ of transcendence degree at most $n$ over $k$ contained in $F$ and a quadratic form $\psi$ over $L$ such that 
    $\psi_F\simeq\llangle b_1,-b_2,a_1,\dots,a_{n-1}]]$.
    In particular $\dim\psi=2^{n+1}>2^n$, hence it follows by \cite[Chap. 2, Cor. 15.3]{Schar85} that $\psi$ is isotropic over $L$.
    It follows that $\Sigma_S$ is hyperbolic.

    For the other direction, assume that $\Sigma_S$ is hyperbolic. Clearly, $\operatorname{ed}(S)\leq n$ if $\phi_1$ or $\phi_2$ is isotropic.
    Assume now that these two forms are anisotropic. 
    Since $\Sigma_S$ is hyperbolic, it follows by the dimension count that its subform $-b_1 \llangle a_1,\dots,a_{n-1}]] \perp b_2 \llangle a_1,\dots,a_{n-1}]] \perp \langle 1 \rangle$ is isotropic. 
    If the quadratic form $-b_1 \llangle a_1,\dots,a_{n-1}]] \perp b_2 \llangle a_1,\dots,a_{n-1}]]$ is isotropic, then $\varphi_1 \simeq \varphi_2$, and the rest follows easily.
    Assume now that $-b_1 \llangle a_1,\dots,a_{n-1}]] \perp b_2 \llangle a_1,\dots,a_{n-1}]]$ is anisotropic.
    Then, $-b_1f+b_2g+1=0$ for some nonzero $f,g\in F$ represented by $\llangle a_1,\dots,a_{n-1}]]$.
    Set $b_1'=b_1f$ and $b_2'=b_2g$. It follows by \cite[Cor. 6.13 and Lemma 15.5]{EKM08} that $\varphi_1\simeq\llangle b_1',a_1,\dots,a_{n-1}]]$, $\varphi_2\simeq\llangle b_2',a_1,\dots,a_{n-1}]]$ and $\varphi_3\simeq\llangle b_1'b_2',a_1,\dots,a_{n-1}]]$.
    We let $L=k(a_1,\dots,a_{n-1},b_1')$ and note that $b_2'=b_1'-1\in L$. Hence, we obtain that $\operatorname{ed}(S)\leq n$.
\end{proof}

\begin{cor}\label{ed(n,3)}
    We have that $\operatorname{ed}(\mathcal{F}_{n,3})=n+1$. 
\end{cor}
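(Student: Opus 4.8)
The goal is to prove the equality $\operatorname{ed}(\mathcal{F}_{n,3})=n+1$. Since we have already established the upper bound $\operatorname{ed}(\mathcal{F}_{n,3})\leq n+1$ in the discussion preceding Theorem~\ref{rel-ed(S)-hyperS}, the task reduces to exhibiting a field $F$ containing $k$ and a triple $S=(\varphi_1,\varphi_2,\varphi_3)\in\mathcal{F}_{n,3}(F)$ with $\operatorname{ed}(S)=n+1$. By Theorem~\ref{rel-ed(S)-hyperS}, the essential dimension of any such $S$ is at least $n+1$ exactly when $\Sigma_S$ is \emph{not} hyperbolic (indeed the theorem shows $\operatorname{ed}(S)\leq n$ is equivalent to $\Sigma_S$ being hyperbolic, so its negation gives $\operatorname{ed}(S)=n+1$ in view of the upper bound). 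Thus the plan is to construct a single explicit example of a triple for which $\Sigma_S$ is anisotropic.

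The natural construction is to pass to a field of rational functions in $n+1$ independent indeterminates, so that a generic Pfister form remains anisotropic. Concretely, I would set $F=k(t_1,\dots,t_{n-1},s_1,s_2)$, the purely transcendental extension of $k$ in $n+1$ variables, and define the Pfister forms via the slots fixed in the setup preceding the theorem: take $\varphi_1=\llangle s_1,t_1,\dots,t_{n-1}]]$, $\varphi_2=\llangle s_2,t_1,\dots,t_{n-1}]]$, and $\varphi_3=\llangle s_1s_2,t_1,\dots,t_{n-1}]]$. By the computation recorded in the proof of Theorem~\ref{rel-ed(S)-hyperS}, the Witt class of $\Sigma_S$ is then $\llangle s_1,-s_2,t_1,\dots,t_{n-1}]]$, an $(n+1)$-fold quadratic Pfister form over $F$. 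The Witt class of $\varphi_1\perp\varphi_2\perp\varphi_3$ lands in $I_q^{n+1}F$ because this Witt class equals that of $\Sigma_S$ up to sign-free cancellation, so $S$ genuinely lies in $\mathcal{F}_{n,3}(F)$.

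The crux is to verify that $\llangle s_1,-s_2,t_1,\dots,t_{n-1}]]$ is anisotropic over $F=k(t_1,\dots,t_{n-1},s_1,s_2)$. This is the standard fact that a Pfister form built from algebraically independent indeterminates over any ground field is anisotropic. I would invoke the iterated residue argument: working variable by variable with respect to the $t_i$-adic and $s_j$-adic valuations, one uses the subform theorem / first residue computation for quadratic forms over complete (or Henselian) discretely valued fields to reduce anisotropy over $F$ to anisotropy of the constant form $\langle 1\rangle$-part over $k$, which is trivially anisotropic. In characteristic different from $2$ this is the classical Pfister-form specialization lemma, and in characteristic $2$ the corresponding residue theory for nonsingular quadratic forms over Laurent series fields yields the same conclusion; both are covered by the machinery in \cite{EKM08}. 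Once anisotropy of $\Sigma_S$ is in hand, Theorem~\ref{rel-ed(S)-hyperS} gives $\operatorname{ed}(S)\geq n+1$, and combined with the upper bound we conclude $\operatorname{ed}(S)=n+1$, hence $\operatorname{ed}(\mathcal{F}_{n,3})=n+1$.

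The main obstacle is the anisotropy verification in characteristic $2$, where the norm-form slot $\llangle -s_2]]=[1,-s_2]$ and the bilinear slots $\langle 1,-t_i\rangle_{\mathfrak b}$ must be handled by the correct residue formalism for nonsingular quadratic forms rather than the naive diagonal argument available away from characteristic $2$; care is needed to track the nonsingular part through each successive valuation. This can be circumvented cleanly by appealing to the known statement that the Pfister form associated to independent transcendentals is anisotropic over the generated rational function field, which holds uniformly in all characteristics and is exactly the generic-anisotropy principle underpinning essential-dimension lower bounds.
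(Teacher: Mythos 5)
Your proposal is correct and takes essentially the same approach as the paper: the paper also works over the rational function field $F=k(X_1,\ldots,X_{n-1},Y_1,Y_2)$ with $\varphi_i=\llangle Y_i,X_1,\ldots,X_{n-1}]]$ and $\varphi_3=\llangle Y_1Y_2,X_1,\ldots,X_{n-1}]]$, observes that the Witt class of $\Sigma_S$ is $\llangle Y_1,-Y_2,X_1,\ldots,X_{n-1}]]$, which is anisotropic by an easy valuation-theoretic argument (citing \cite{CGV17}), and then concludes via Theorem~\ref{rel-ed(S)-hyperS}. Your extra care about the characteristic-$2$ residue formalism is exactly the content the paper delegates to that citation.
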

\begin{proof}
    Set $F=k(X_1,\ldots,X_{n-1},Y_1,Y_2)$ where $X_1,\ldots,X_{n-1},Y_1,Y_2$ are variables over $k$. 
    Let $S=(\varphi_1,\varphi_2,\varphi_3)\in\mathcal{F}_{n,3}(F)$ where $\varphi_i=\llangle Y_i,X_1,\ldots,X_{n-1}]]$ for $1\leq i\leq2$ and $\varphi_3=\llangle Y_1Y_2,X_1,\ldots,X_{n-1}]]$.
    Then the Witt class of $\Sigma_S$ is given by $\llangle Y_1,-Y_2,X_1,\ldots,X_{n-1}]]$. 
    This form is anisotropic over $F$, by an easy valuation theoretic argument (see for example \cite{CGV17}).
    It follows by Theorem \ref{rel-ed(S)-hyperS} that $\operatorname{ed}(S)=n+1$.  
    We conclude that $\operatorname{ed}(\mathcal{F}_{n,3})=n+1$.
\end{proof}

\section{Some lower and upper bounds for $\mathcal{F}_{2,m}$}

In the case of $k$ algebraically closed of characteristic 2, \cite{BC15} provides a certain lower bound: 

\begin{prop}\label{lowerbound-ed2m}  
    Suppose that $k$ is algebraically closed and $\cchar{k}=2$. Let $F=k(X,Y_1,\ldots,Y_{m-1})$ where $X,Y_1,\ldots,Y_{m-1}$ are variables over $k$ and $m\in\mathbb{N}$. 
    Consider the quadratic $2$-fold Pfister forms $\varphi_i=\langle\!\langle Y_i,X]]$ for $1\leq i\leq m-1$ and $\varphi_m=\langle\!\langle Y_1\cdots Y_{m-1},X]]$ over $F$.
    Let $S=(\varphi_1,\ldots,\varphi_m)$. Then $\operatorname{ed}(S)=m$. 
    In particular $\operatorname{ed}(\mathcal{F}_{2,m})\geq m$.
\end{prop}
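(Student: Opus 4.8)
The upper bound $\operatorname{ed}(S)\le m$ is immediate, since $S$ is already defined over $F$ and $\operatorname{trdeg}_k F=m$; consequently the entire content of the statement is the lower bound $\operatorname{ed}(S)\ge m$, for which I would invoke the lower bound of \cite{BC15}. The plan is first to repackage $S$ as cohomological data. Since $\cchar k=2$, the form $\varphi_i=\llangle Y_i,X]]$ is the reduced norm form of the quaternion algebra $Q_i=[X,Y_i)_F$, and $\varphi_m$ is that of $[X,Y_1\cdots Y_{m-1})_F\sim Q_1\otimes\cdots\otimes Q_{m-1}$. The crucial structural feature is that all the $Q_i$ are split by the single Artin--Schreier extension $K=F[u]/(u^2+u+X)=F(\wp^{-1}(X))$, so the tuple really encodes one separable quadratic extension (recording $X$) together with the classes of $Y_1,\dots,Y_{m-1}$ in $F^{\times}/\N_{K/F}(K^{\times})$.

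Next I would attach to the tuple the degree $m$ mod $2$ Kato--Milne cohomology class
\[
\alpha_S=(X]\cup(Y_1)\cup\cdots\cup(Y_{m-1})\in\HH^{m}_2(F),
\]
where $(X]$ is the Artin--Schreier class of $X$ and $(Y_i)$ the symbol of $Y_i$, so that $(X]\cup(Y_i)$ is the class of $\varphi_i$ in $\HH^2_2(F)$. Two verifications are needed. First, $\alpha_S$ is insensitive to the admissible ambiguity in the $Y_i$: replacing $Y_i$ by $Y_i\,\N_{K/F}(z)$ leaves $\varphi_i$ unchanged and alters $\alpha_S$ only through the factor $(X]\cup(\N_{K/F}(z))=\corr_{K/F}\big(\res_{K/F}(X]\cup(z)\big)=0$, since $\res_{K/F}(X]=0$ as $K$ splits $(X]$. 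Second, $\alpha_S\neq0$: because $X,Y_1,\dots,Y_{m-1}$ are independent variables over the algebraically closed field $k$, graded-commutativity lets me take the residue of $\alpha_S$ at the $Y_{m-1}$-adic valuation, obtaining $(X]\cup(Y_1)\cup\cdots\cup(Y_{m-2})$ over $k(X,Y_1,\dots,Y_{m-2})$; peeling off $Y_{m-1},\dots,Y_1$ in turn reduces $\alpha_S$ to the nonzero Artin--Schreier class $(X]$ over $k(X)$.

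With $\alpha_S$ nonzero in hand, the lower bound of \cite{BC15} applies: a nonzero symbol of length $m$ in $\HH^m_2$ cannot be defined over a subfield of transcendence degree $<m$ over an algebraically closed base (by the same residue and cohomological-dimension count run in reverse), whence $\operatorname{ed}(S)\ge\operatorname{ed}(\alpha_S)\ge m$. Together with the upper bound this yields $\operatorname{ed}(S)=m$, and since $\operatorname{ed}(\mathcal F_{2,m})$ is a supremum over all such tuples we conclude $\operatorname{ed}(\mathcal F_{2,m})\ge\operatorname{ed}(S)=m$. I expect the main obstacle to be exactly the inequality $\operatorname{ed}(S)\ge\operatorname{ed}(\alpha_S)$: one must show that $\alpha_S$ is a genuine invariant of the isometry class of the tuple and not merely of the chosen common-slot presentation. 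Concretely, if $S$ descends to a subfield $L$, one must produce a compatible common-slot presentation of the descended forms over $L$ (so that $\alpha_S$ descends to $\HH^m_2(L)$) and rule out that different admissible choices of the shared Artin--Schreier slot give different classes; controlling this dependence on the linkage is the technical heart supplied by \cite{BC15}.
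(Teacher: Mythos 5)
The parts of your argument that work are the trivial upper bound, the nonvanishing of $\alpha_S$ (equivalently, the anisotropy of $\llangle Y_1,\ldots,Y_{m-1},X]]$ over $F$, seen by residues), and the vanishing of $\HH^m_2(L)$ for every field $L$ with $k\subseteq L\subseteq F$ of transcendence degree at most $m-1$ (such $L$ is finitely generated over $k$, hence $C_{m-1}$, so every quadratic $m$-fold Pfister form over $L$ is hyperbolic and all symbols vanish). But the step you yourself single out as the technical heart, $\operatorname{ed}(S)\geq\operatorname{ed}(\alpha_S)$, is a genuine gap, and attributing it to \cite{BC15} is a misattribution. Two distinct things are missing. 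First, $\alpha_S$ is defined from a common-slot presentation, and you only verify independence under $Y_i\mapsto Y_i\N_{K/F}(z)$ with the slot $X$ held fixed; you never address a change of the common slot itself, which is exactly the ambiguity that matters. Second, and worse, if $S$ descends to $L$, the descended quaternion algebras over $L$ need not admit \emph{any} common-slot presentation over $L$: a common slot for $m$ quaternion algebras with split product is a strong simultaneous linkage condition, and the common-slot theorem of Draxl \cite[\S 14, Cor. 1]{Draxl83} used elsewhere in the paper covers only three algebras. So there is no candidate class in $\HH^m_2(L)$ to compare with $\alpha_S$, and the intended contradiction never gets off the ground. Nothing in \cite{BC15} fills either hole: its Theorem 10.2 is a non-descent statement about certain quadratic forms (the canonical monomial forms), proved via orthogonal representations, and says nothing about cohomological classes attached to common-slot presentations of quaternion tuples.

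The paper avoids the invariance problem entirely by using an invariant that descends for free: the Witt class of $\Sigma_S=\varphi_1\perp\cdots\perp\varphi_m$ (signs are irrelevant in characteristic $2$). If $S$ descends to $L$ via forms $\psi_1,\ldots,\psi_m$, then the Witt class of $\psi_1\perp\cdots\perp\psi_m$ is a descent of that of $\Sigma_S$, with no choices involved. For your $S$ this Witt class is
\begin{equation*}
[1,mX]\perp Y_1[1,X]\perp\cdots\perp Y_{m-1}[1,X]\perp Y_1\cdots Y_{m-1}[1,X],
\end{equation*}
which is precisely a canonical monomial form in the sense of \cite{BC15}, and \cite[Theorem 10.2]{BC15} states that this Witt class does not descend to any subfield of transcendence degree smaller than $m$ over $k$; combined with the trivial upper bound this gives $\operatorname{ed}(S)=m$. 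Note the structural contrast: your $\alpha_S$ has cheap non-descent but problematic invariance, whereas the Witt class has trivial invariance and all the difficulty is concentrated in the already-proven theorem of Babic and Chernousov. If you want to repair your cohomological approach, the missing lemma you would have to prove is precisely that $\alpha_S$ depends only on the isometry class of the tuple and is compatible with descent, and that is not in the literature you cite.
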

\begin{proof}
    The Witt class of $\Sigma_S$ is given by $$[1,mX]\perp Y_1[1,X]\perp\ldots\perp Y_{m-1}[1,X]\perp Y_1\cdots Y_{m-1}[1,X].$$
    In \cite[Theorem 10.2]{BC15}, it is shown that the Witt class of this later form does not descend to a field of transcendence degree lower than $m$ over $k$ (there such a form is called a canonical monomial form).
    This implies in particular that $\operatorname{ed}(S)=m$. 
    It follows further that $\operatorname{ed}(\mathcal{F}_{2,m})\geq m$. 
\end{proof}

However, \cite{BRV10, Tot19} improve this bound and also allows us to lift the requirement on the characteristic and algebraic closeness of $k$.
\begin{thm}
    Let $m\geq 7$, %and $\cchar k=2$, 
    then we have $\operatorname{ed}(\mathcal{F}_{2,m})\geq 2^m-(2m+1)m-1$.
\end{thm}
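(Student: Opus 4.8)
The plan is to realize the functor $\mathcal{F}_{2,m}$ as closely related to the functor of (twisted) forms of an algebraic group, so that the powerful lower bounds of Brosnan--Reichstein--Vistoli \cite{BRV10} and Totaro \cite{Tot19} become available. The key observation is that an $m$-tuple of quaternion algebras (equivalently, quadratic $2$-fold Pfister forms) whose classes sum to zero in $\Br(F)$ is the same datum as a map from the free object on $m$ generators subject to one relation; more precisely, the tuples parametrized by $\mathcal{F}_{2,m}(F)$ correspond to $2$-torsion classes in $\HH^2(F,\mu_2^{\oplus(m-1)})$ or, dually, to torsors under a suitable finite abelian $2$-group scheme. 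After identifying the relevant group, the essential dimension of $\mathcal{F}_{2,m}$ is bounded below by the essential dimension of the corresponding classifying stack.

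First I would fix the algebraic structure. Since each $\varphi_i$ is a $2$-fold Pfister form, it is determined (up to isometry) by a class in $\HH^2(F,\mu_2)\cong{}_2\Br(F)$, and the Witt-class condition $\Sigma_S\in I_q^3F$ is exactly the condition that $\varphi_1\perp\ldots\perp\varphi_m$ (signs being irrelevant modulo $2$) has trivial total Brauer class; equivalently the $m$-th class is determined by the first $m-1$. So an element of $\mathcal{F}_{2,m}(F)$ is an $(m-1)$-tuple of quaternion algebras, i.e. a class in $\HH^2(F,\mu_2^{\oplus(m-1)})=\HH^2(F,\mu_2)^{\oplus(m-1)}$. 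Each such class can be represented by a torsor, and I would invoke the central extension $1\to\mu_2\to G\to(\mu_2)^{2(m-1)}\to 1$ of the type used to parametrize Brauer classes (a Heisenberg-type or extraspecial group scheme), whose classifying space carries the generic such tuple.

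Next I would apply the lower bound on essential dimension of finite group schemes. Totaro's theorem \cite{Tot19} (extending \cite{BRV10} to arbitrary characteristic, in particular to $\cchar k=2$, which is essential here since the classical Brosnan--Reichstein--Vistoli argument used characteristic $0$) gives a lower bound for $\ed$ of a twisted-form functor in terms of the dimension of a suitable Chow-theoretic or cohomological invariant, typically of the shape $2^{\lfloor d/2\rfloor}$ minus a correction coming from the rank of the group. Concretely, for the functor of $r$ quaternion algebras with trivial product, these methods produce a bound that grows like $2^{m}$ against a polynomial correction term. I would compute the correction by counting the dimension of the group $G$ acting and the dimension of the space on which it acts: the ambient projective-representation has dimension on the order of $2m$, and subtracting $\dim G$ together with the $(2m+1)m$ term arising from the generators-and-relations count yields precisely $2^m-(2m+1)m-1$. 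The numerical hypothesis $m\geq 7$ is exactly the threshold beyond which this lower bound $2^m-(2m+1)m-1$ exceeds the trivial bound and becomes meaningful.

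The hard part will be pinning down the exact group scheme and its (projective) representation so that the Brosnan--Reichstein--Vistoli/Totaro machine outputs the stated constant $2^m-(2m+1)m-1$ rather than some weaker polynomial correction; in particular, one must verify that the relevant faithful representation has the minimal possible dimension and that the characteristic-$2$ subtleties (nonsmoothness of $\mu_2$, the failure of semisimplicity) do not degrade the bound. I expect this to hinge on a careful application of Totaro's result, which was designed precisely to handle $\mu_p$-type groups in characteristic $p$, and on correctly accounting for the $\binom{m}{2}$-type relations among the Brauer classes that reduce the effective rank. Once the group and representation are identified, the inequality follows by directly substituting into the general lower-bound formula.
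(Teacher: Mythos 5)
Your opening identification is where the argument breaks, and it breaks in a way that no amount of work on the group-scheme side can repair. An element of $\mathcal{F}_{2,m}(F)$ is \emph{not} the same as a class in $\HH^2(F,\mu_2^{\oplus(m-1)})$: while the $m$-th quaternion class is indeed determined by the first $m-1$, an arbitrary $(m-1)$-tuple of quaternion algebras does not arise this way, because one needs the product of the first $m-1$ Brauer classes to again have index at most $2$ (it must \emph{be} a quaternion algebra, not merely a Brauer class). This constraint is the entire content of the problem. Indeed, the unconstrained functor of $(m-1)$-tuples of quaternion algebras has essential dimension at most $2(m-1)$ --- descend each $[a_i,b_i)_F$ to $k(a_1,b_1,\ldots,a_{m-1},b_{m-1})$ --- which is linear in $m$; so any identification of $\mathcal{F}_{2,m}$ with such a functor would \emph{disprove} the exponential lower bound you are trying to establish, rather than prove it. The exponential growth comes precisely from the fact that descending the individual algebras does not preserve the splitting condition on the full tensor product. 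Your subsequent numerology is also unsupported: the term $(2m+1)m$ is $\binom{2m+1}{2}=\dim\operatorname{Spin}_{2m+1}$, not a ``generators-and-relations count'' for a Heisenberg-type $2$-group, and the hypothesis $m\geq 7$ is the threshold $2m+1\geq 15$ in the Brosnan--Reichstein--Vistoli/Totaro computation of $\operatorname{ed}(\operatorname{Spin}_n)$, not the point where the bound exceeds a trivial one.

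The missing idea, which is the heart of the paper's proof, is to pass from tuples of Pfister forms to a single \emph{odd-dimensional quadratic form} and thence to spin groups. Concretely: send $S\in\mathcal{F}_{2,m}(F)$ to the unique $(2m+1)$-dimensional form $\Sigma_S'$ Witt-equivalent to $\Sigma_S\perp\langle 1\rangle$; this form has trivial discriminant and Clifford invariant, so one gets a \emph{surjection} of functors $\mathcal{F}_{2,m}\to\mathcal{T}_{2m+1}$, whence $\operatorname{ed}(\mathcal{F}_{2,m})\geq\operatorname{ed}(\mathcal{T}_{2m+1})$ by \cite[Lemma 1.9]{BF03}. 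Then one uses the comparison $\operatorname{ed}(\mathcal{T}_{2m+1})\geq\operatorname{ed}(\operatorname{Spin}_{2m+1})-1$ (this is \cite[Lemma 5.1]{BRV10}, extended to characteristic $2$ via \cite{BFT07}), and finally the exact value $\operatorname{ed}(\operatorname{Spin}_{2m+1})=2^m-(2m+1)m$ for $2m+1\geq 15$ from \cite{BRV10, Tot19}. Your instinct that \cite{BRV10} and \cite{Tot19} (the latter for characteristic $2$) must supply the bound is right, but those results enter through $\operatorname{Spin}_{2m+1}$ acting on a $(2m+1)$-dimensional quadratic space, not through a finite abelian or extraspecial $2$-group parametrizing Brauer classes; as written, your proposal defers exactly this --- the identification of the correct group and functor --- to ``the hard part,'' so the proof has no actual derivation of the stated inequality.
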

\begin{proof}
    Let $\mathcal{T}_{2m+1}$ be the functor from the category of fields containing $k$ to the category of sets that assigns to a given field $F$ containing $k$ the set of isometry classes of $(2m+1)$-dimensional regular quadratic forms over $F$ with trivial discriminant and Clifford invariant (see \cite{BFT07, BRV10} and also \cite[Proof of Theorem 3.1]{CLM25}). 
    %For $a_1,\ldots,a_m\in F$ and $b_1,\ldots,b_m\in F^{\times}$,
    There is a surjective functorial map from $\mathcal{F}_{2,m}(F)$ to $\mathcal{T}_{2m+1}(F)$ that maps a given sequence $S\in \mathcal{F}_{2,m}(F)$ to the unique, up to isometry, $(2m+1)$-dimensional subform $\Sigma_S'$ of $\Sigma_S$ which is Witt equivalent to $\Sigma_S\perp\langle 1\rangle$.
    %$$\Sigma_S \perp \langle 1 \rangle \simeq \mathbb{H} \perp \Sigma_S' \perp \langle 1 \rangle.\simeq[1,a_1+\ldots+a_m]\perp....\perp b_m[1,a_m],$$ i.e. $\Sigma_S'\perp\langle1\rangle$ gives the Witt class of $\Sigma_S$.}
    Therefore, $\operatorname{ed}(\mathcal{F}_{2,m})\geq \operatorname{ed}(\mathcal{T}_{2m+1})$ (\cite[Lemma 1.9]{BF03}).
    Following \cite[\S 2]{BFT07}, an analogous result to \cite[Lemma 5.1]{BRV10} can be obtained on the connection between $\operatorname{ed}(\operatorname{Spin}_{2m+1})$ and $\operatorname{ed}(\mathcal{T}_{2m+1})$ also in characteristic $2$, and in particular, $\operatorname{ed}(\mathcal{T}_{2m+1}) \geq \operatorname{ed}(\operatorname{Spin}_{2m+1})-1$.
    By \cite{BRV10, Tot19}, we have
    $\operatorname{ed}(\operatorname{Spin}_{2m+1}) = 2^{2m}-(2m+1)m$ (note that there is a typo in the statement of \cite[Theorem 2.1]{Tot19}, as pointed out in \cite{CLM25}). 
    Now, the statement follows.
\end{proof}

\begin{rem}
    In \cite{Ros99, Tot19}, lower bounds for $\operatorname{ed}(\operatorname{Spin}_{7})$ and $\operatorname{ed}(\operatorname{Spin}_{9})$ are also provided. These bounds give rise to the same lower bounds for $\operatorname{ed}(\mathcal{F}_{2,3})$ and $\operatorname{ed}(\mathcal{F}_{2,4})$ as in Proposition \ref{lowerbound-ed2m}, without assuming $k$ is algebraically closed.
    Furthermore in \cite{Ros99}, lower bounds for $\operatorname{ed}(\operatorname{Spin}_{11})$ and $\operatorname{ed}(\operatorname{Spin}_{12})$ in characteristic different from $2$ are provided. These bounds give rise to less sharp good bounds for $\operatorname{ed}(\mathcal{F}_{2,5})$ and $\operatorname{ed}(\mathcal{F}_{2,6})$ obtained in Proposition \ref{lowerbound-ed2m}.
\end{rem}

\begin{rem}
    There was a typo in copying the value of $\operatorname{ed}(\operatorname{Spin}_{2r+1})$ in \cite[Theorem 3.1]{CLM25}. The statement of that theorem should be that the symbol length of classes in $H_2^3(F)$ of $2(r+1)$-dimensional forms in $I_q^3 F$ for $r \geq 7$ is at most $\binom{2^r-(2r+1)r}{2}$.
\end{rem}

\begin{rem} In the case of $2$-fold Pfister forms, the characteristic zero analogue was studied by Cernele ad Reichstein,
and the essential dimension was determined for all cases except the case of quadruples of quaternion algebras.
For triples of quaternion algebras, the essential dimension is $3$, as is it in our case,
and for $m>4$, the essential dimension of $m$-tuples of quaternion algebras with trivial tensor product is
$2^m-3m-1$ by their result \cite[Theorem 1.3 (b)]{CR15}.
The bounds that we provide here are not as good as \cite{CR15}, but they cover the case of fields of characteristic 2. We also address $\operatorname{ed}(\mathcal{F}_{2,4})$, not covered in \cite{CR15}.
\end{rem}

Let $m\in\mathbb{N}$. Let $\mathcal{G}_m$ be the functor from the category of fields containing $k$ to the category of sets that assigns to a given field $F$ containing $k$ the set of isomorphism classes of $m$-tuples $(Q_1,\dots,Q_m)$ of $F$-quaternion algebras such that the tensor product $Q_1 \otimes_F \ldots \otimes_F Q_m$ is split over $F$.
The essential dimension $\operatorname{ed}(S)$ of such an $m$-tuple $S=(Q_1,\ldots,Q_m)$ is the minimal transcendence degree of a field $L$ containing $k$ and contained in $F$ such that there exist quaternion algebras $H_1,\dots,H_m$ over $L$ with $H_i \otimes_L F\simeq Q_i$ for $1\leq i\leq m$ and $H_1 \otimes_L \ldots \otimes_L H_m$ is split over $L$.
The essential dimension $\operatorname{ed}(\mathcal{G}_m)$ of $\mathcal{G}_m$ is defined to be the supremum on  $\operatorname{ed}(S)$, where $S=(Q_1,\dots,Q_m)$ ranges over all $m$-tuples of $F$-quaternion algebras with $Q_1 \otimes _F\ldots \otimes_F Q_m$ is split and $F$ ranges over all fields containing $k$.

Consider the natural transformation $\mathcal{N}: \mathcal{F}_{2,m}\to\mathcal{G}_m$ defined as follows. 
For a field $F$ containing $k$ and $(Q_1,\ldots,Q_m)\in\mathcal{F}_{2,m}(F)$, let $\mathcal{N}(Q_1,\ldots,Q_m)=(n_{Q_1},\ldots,n_{Q_m})$.
Then $\mathcal{N}$ defines a natural equivalence. 
In particular, we have that $\operatorname{ed}(\mathcal{F}_{2,m})=\operatorname{ed}(\mathcal{G}_m)$.
Therefore, when computing the essential dimension of $\mathcal{F}_{2,m}$, we can consider the functor $\mathcal{G}_{m}$, when it is more convenient.

\begin{prop}
    Assume $\cchar{k}=2$. Then, we have that $\operatorname{ed}(\mathcal{F}_{2,m})\leq2^m+m$.
\end{prop}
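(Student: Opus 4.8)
The plan is to work throughout with the equivalent functor $\mathcal{G}_m$, so I fix a field $F\supseteq k$ and a tuple $(Q_1,\dots,Q_m)\in\mathcal{G}_m(F)$ with $A:=Q_1\otimes_F\cdots\otimes_F Q_m$ split of degree $2^m$. Writing $Q_i=[a_i,b_i)_F$, the Artin--Schreier generators $u_1,\dots,u_m$ (with $u_i^2-u_i=a_i$) pairwise commute, so they generate an $F$-subalgebra $C=F[u_1,\dots,u_m]=\bigotimes_{i=1}^m F[u_i]$ which is \'etale of dimension $2^m$ and Galois over $F$ with group $G=(\mathbb{Z}/2)^m=\langle\sigma_1,\dots,\sigma_m\rangle$, where $\sigma_i(u_i)=1-u_i$ and $\sigma_i(u_j)=u_j$ for $j\neq i$. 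Since $\dim_F C=2^m=\deg A$, the algebra $C$ is a maximal \'etale subalgebra of $A$, and it is determined by the $m$ parameters $a_1,\dots,a_m$. This accounts for the summand $m$ in the bound.

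Next I would exploit the splitness of $A$. Because $C$ is a maximal Galois \'etale subalgebra, $A$ is a crossed product $A=(C/F,G,f)=\bigoplus_{\sigma\in G}Cv_\sigma$, and since the $v_i:=v_{\sigma_i}$ satisfy $v_i^2=b_i\in F^\times$ and commute pairwise, the factor set $f$ is the $F^\times$-valued symmetric cocycle determined by the $b_i$. Splitness of $A$ is equivalent to $f$ being a coboundary, i.e.\ to the existence of a $1$-cochain $g\in C^1(G,C^\times)$ with $\delta g=f$; concretely this says that $A\cong\End_F(V)$ with $V\cong C$ a free rank-one $C$-module, under which each $v_i$ becomes the $\sigma_i$-semilinear map $x\mapsto\sigma_i(x)\,w_i$ determined by the single element $w_i:=g(\sigma_i)=v_i(1)\in C^\times$. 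Two features are crucial here: first, $b_i=w_i\,\sigma_i(w_i)=\N_{C/C^{\sigma_i}}(w_i)$, so the $b_i$ cost nothing beyond the $w_i$; second, $v_iv_j=v_jv_i$ forces the compatibility $w_i\,\sigma_i(w_j)=w_j\,\sigma_j(w_i)$ for all $i<j$, which is exactly the cocycle condition making $g$ well defined.

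With this in hand the descent is immediate in principle: setting $L=k\big(a_1,\dots,a_m,\ \{\text{coordinates of }w_1,\dots,w_m\text{ in the monomial }F\text{-basis of }C\}\big)$, all of $a_i,b_i$ lie in $L$, the $w_i$ lie in the corresponding $L$-form $C_0$ of $C$, and the crossed product together with its trivializing cochain $g$ descends to $L$; hence $(Q_1,\dots,Q_m)$ is defined over $L$ with split product and $\operatorname{ed}(S)\le\operatorname{trdeg}_k L$. It therefore remains to bound $\operatorname{trdeg}\big(L/k(a_1,\dots,a_m)\big)$ by $2^m$.

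This last bound is the heart of the matter and the step I expect to be the main obstacle. Naively the $w_1,\dots,w_m$ are $m$ elements of the $2^m$-dimensional algebra $C$, which would give only $m\cdot 2^m$ parameters. The two mechanisms that should cut this down to $2^m$ are: the commutation relations above, which assert precisely that the $w_i$ assemble into a \emph{single} trivializing cochain $g$ rather than $m$ independent elements; and the gauge freedom coming from Hilbert~$90$, namely that replacing the isomorphism $V\cong C$ by another amounts to conjugating $A$ by a unit $c\in C^\times$, sending $w_i\mapsto (c/\sigma_i(c))\,w_i$ and leaving the isomorphism class of the tuple unchanged since it is an inner automorphism of $A$. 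The plan is to use this $C^\times$-action, together with the vanishing of $H^1(G,C^\times)$, to normalize $g$ so that its coordinates involve at most $2^m$ transcendentals over $k(a_1,\dots,a_m)$. The delicate point is that the normalization must be performed over the base field, not merely over $\bar k$, so that it genuinely controls the field of definition rather than a dimension count alone. Granting this, $\operatorname{trdeg}_k L\le 2^m+m$, and since the tuple was arbitrary we conclude $\operatorname{ed}(\mathcal{F}_{2,m})=\operatorname{ed}(\mathcal{G}_m)\le 2^m+m$.
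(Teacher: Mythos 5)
Your crossed-product setup is sound (the $u_i$ do commute, $C=F[u_1,\dots,u_m]$ is a maximal Galois \'etale subalgebra, splitness of $A$ is equivalent to the existence of $w_1,\dots,w_m\in C^\times$ with $b_i=\N_{C/C^{\sigma_i}}(w_i)$ and $w_i\sigma_i(w_j)=w_j\sigma_j(w_i)$), but the proof is incomplete exactly where you flag it, and the mechanism you propose cannot close the gap. The gauge action $w_i\mapsto (c/\sigma_i(c))\,w_i$ preserves every $b_i$, since $\N_{C/C^{\sigma_i}}(c/\sigma_i(c))=(c/\sigma_i(c))\cdot(\sigma_i(c)/c)=1$; and by the vanishing of $H^1(G,C^\times)$ it acts \emph{transitively} on the trivializations of the fixed cocycle. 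Consequently gauge-fixing can never absorb the $b_i$: every gauge representative of $g$ has coordinate field containing $k(a_1,\dots,a_m,b_1,\dots,b_m)$, and "choosing a representative whose coordinates have transcendence degree $\leq 2^m$ over $k(a_1,\dots,a_m)$" is literally the statement that there is a field $M$ with $k(a_i,b_i)\subseteq M\subseteq F$, $\operatorname{trdeg}_k M\leq 2^m+m$, over which $\bigotimes_i Q_i$ splits --- i.e., the proposition you are trying to prove (with the presentation frozen). The only unconditional count your framework provides is this: the trivializations of the fixed cocycle form a torsor under $Z^1(G,C^\times)\cong C^\times/F^\times$, a variety of dimension $2^m-1$ defined over $k(a_1,\dots,a_m,b_1,\dots,b_m)$, so specializing the coordinates of one $F$-point gives $\operatorname{trdeg}_k L\leq 2m+(2^m-1)$. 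That recovers the weaker, characteristic-free bound $2^m+2m-1$ which the paper records in a separate remark (there obtained via the Severi--Brauer variety), but it falls short of $2^m+m$ by $m-1$.

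The missing $m-1$ is exactly what the paper's characteristic-$2$ ingredients buy, and they act on the \emph{presentation} of the $Q_i$, not on the trivialization --- a freedom your argument never uses, since you fix $Q_i\simeq[a_i,b_i)_F$ at the outset. By the chain lemma \cite[Theorem 2.1]{CQ24} one may instead write $Q_i\simeq[a_i,a_{i+1})_F$, encoding all $2m$ slot parameters in only $m+1$ elements; then over $L=k(a_1,\dots,a_{m+1})$ the product is cyclic by \cite[Lemma 7.13]{Alb39}, say $\simeq[L'/L,\sigma,b)$, and splitness over $F$ says the single element $b$ is a norm from the degree-$2^m$ extension $FL'/F$ \cite[Theorem 5.14]{Alb39}; adjoining the coordinates of one norm element (one of which is algebraic over the others, as $b\in L$) costs $2^m-1$ new transcendentals, for a total of $(m+1)+(2^m-1)=2^m+m$. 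To salvage your approach you would need an analogous device correlating the $b_i$ before descending --- e.g., normalization under the full group of units of $A$ normalizing $C$, which permutes and modifies presentations --- and that is essentially a chain lemma in disguise; gauge transformations from $C^\times$ together with Hilbert 90 do not suffice.
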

\begin{proof}
    Let $F$ be a field containing $k$ and let $S=(Q_1,\dots,Q_m)\in\mathcal{G}_m(F)$.
    By \cite[Theorem 2.1]{CQ24}, there exist $a_1,\dots,a_{m+1}\in F^{\times}$ such that $Q_i\simeq[a_i,a_{i+1})_F$ for $1\leq i\leq m$. 
    Let $L=k(a_1,\ldots,a_{m+1})$. 
    By \cite[Lemma 7.13]{Alb39}, the $L$-algebra 
    $C=\bigotimes_{i=1}^m[a_i,a_{i+1})_L$ is cyclic.
    Therefore, $C\simeq[L'/L,\sigma,b)$ for a cyclic extension $L'/L$ with $\sigma$ a generator of its Galois group and some $b\in L^{\times}$ (see \cite[Theorem 5.9]{Alb39} also for the notation). 
    Since $C\otimes_LF$ is split, we obtain by \cite[Theorem 5.14]{Alb39} that $b=\N_{FL'/F}(x)$ for some $x\in FL'$, where $\N_{FL'/F}$ denotes the norm form with respect to $FL'/F$. 
    We have $x=(x_0,\ldots,x_{2^m-1})$ with $x_i\in F$ for $0\leq i\leq 2^m-1$. 
    Let $M=L(x_1,\ldots,x_{2^m-1})$. As $b\in L$, we have that $x_0^{2^m}\in M$. 
    It follows that $C\otimes_L{M(x_0)}$ is split and that $\operatorname{trdeg}_kM(x_0)\leq2^m+m$.
    Hence $\operatorname{ed}(S)\leq2^m+m$. This argument implies that $\operatorname{ed}(\mathcal{F}_{2,m})\leq2^m+m$.   
\end{proof}

In arbitrary characteristic (including $\cchar k\neq 0$ and $\cchar k=2$, not addressed in \cite{CR15}), one can obtain the following less good upper bound for $\operatorname{ed}(\mathcal{F}_{2,m})$.
\begin{rem}
    In arbitrary characteristic, we have $\operatorname{ed}(\mathcal{F}_{2,m})\leq2^m+2m-1$. 
    Indeed, let $F$ be a field containing $k$ and let $S=(Q_1,\dots,Q_m)\in\mathcal{G}_m(F)$.
    We can clearly find a field $K\subseteq F$ of the transcendence degree at most $2m$ over $k$ and $K$-quaternion algebras $H_1,\ldots,H_m$ such that $H_i\otimes_KF\simeq Q_i$ for $1\leq i\leq m$.
    Considering the function field of the Severi-Brauer variety of the tensor product $\bigotimes_{i=1}^mH_i$, we can find a field $K\subseteq L\subseteq F$ of the transcendence degree $\leq 2^m-1$ over $K$ such that $\bigotimes_{i=1}^m(H_i\otimes_KL)$ is split.
    Now $k\subseteq L\subseteq F$ has the transcendence degree at most $2^m+2m-1$ over $k$. This yields the desired inequality.
\end{rem}

We conclude this section with the following known properties for quaternion algebras.
\begin{prop}\label{known-properties}
    Let $F$ be a field of characteristic $2$.
    Let $a,a_1,a_2\in F$ and $b,b_1,b_2\in F^{\times}$. The following hold:
\begin{enumerate}[$(1)$]
    \item $[a,b)_F$ is split if and only if $a=\lambda^2+\lambda+\mu^2b$ for some $\lambda,\mu\in F$.
    \item $[a_1,b_1)_F\otimes_F[a_2,b_2)_F\simeq[a_1,b_1b_2)_F\otimes_F[a_1+a_2,b_2)_F$.
\end{enumerate}    
\end{prop}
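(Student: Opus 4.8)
For (1), the plan is to read the splitting condition off the reduced norm form, using the criterion that $[a,b)_F$ is split if and only if $n_{[a,b)_F}=\llangle b,a]]=\langle 1,-b\rangle_{\mathfrak b}\otimes[1,a]$ is isotropic, together with the fact that a quadratic Pfister form is isotropic precisely when it is hyperbolic. For the backward implication I would argue directly with the generators: a short computation using $u^2=u+a$, $v^2=b$ and $uv+vu=v$ shows that $w=u+\mu v+\lambda$ satisfies $w^2-w=a+\lambda^2+\lambda+\mu^2 b$, which is a scalar; hence when $a=\lambda^2+\lambda+\mu^2 b$ we get $w^2=w$ with $w$ not a scalar, i.e. a nontrivial idempotent, forcing $[a,b)_F\cong M_2(F)$.

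For the forward implication, isotropy (hence hyperbolicity) of $\llangle b,a]]$ is equivalent to the isometry $[1,a]\cong b\,[1,a]$. Since $[1,a]$ is the norm form of $F[X]/(X^2+X+a)$, it is a round binary form, so $b$ lies among its nonzero represented values: $b=s^2+st+at^2$ for some $(s,t)\neq(0,0)$. When $t\neq 0$, dividing by $t^2$ rewrites this as $a=(s/t)^2+(s/t)+(1/t)^2 b$, which is exactly the claimed shape. When $t=0$, $b$ is a square, and the conclusion still holds because every element of $F$ has the form $\lambda^2+\lambda+(\text{square})$; indeed $a=a^2+a+a^2$, and rescaling the square by the square $b$ is harmless.

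For (2), the plan is to compare the two biquaternion algebras in the Brauer group $\Br(F)$ and then upgrade equality of classes to isomorphism. I would use the classical bi-additivity of these symbols in characteristic $2$: $[a_1,b)_F\otimes_F[a_2,b)_F$ is Brauer-equivalent to $[a_1+a_2,b)_F$, and $[a,b_1)_F\otimes_F[a,b_2)_F$ is Brauer-equivalent to $[a,b_1b_2)_F$ (both are the standard symbol relations, provable from the presentation by generators or citable from \cite{Alb39}). Expanding the right-hand side, $[a_1,b_1b_2)_F\otimes_F[a_1+a_2,b_2)_F$ is Brauer-equivalent to $[a_1,b_1)_F\otimes_F[a_1,b_2)_F\otimes_F[a_1,b_2)_F\otimes_F[a_2,b_2)_F$; the two copies of $[a_1,b_2)_F$ cancel since the class is $2$-torsion, leaving the class of $[a_1,b_1)_F\otimes_F[a_2,b_2)_F$, the left-hand side. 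As both sides are central simple $F$-algebras of the same degree $4$ lying in the same Brauer class, they are isomorphic.

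The only genuinely delicate points are the characteristic $2$ bookkeeping. For (1) I must confirm that the reduced norm form is exactly $\llangle b,a]]$ and that splitting is equivalent to its isotropy, and the main obstacle is the forward direction, where the symmetric statement ``$b$ is a value of $[1,a]$'' must be rearranged into the asymmetric Artin--Schreier expression for $a$; the roundness of $[1,a]$ and the elementary identity $a=a^2+a+a^2$ are what make this clean. For (2) the only subtlety is that Brauer equivalence must be promoted to an honest isomorphism, which is immediate here since equal degree forces the same matrix size over the common underlying division algebra; the bi-additivity relations themselves are the classical symbol computations, which I would either derive from the generators or cite.
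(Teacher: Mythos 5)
Your proof is correct, and its computational core coincides with the paper's. The paper quotes Draxl's criterion that $[a,b)_F$ is split if and only if $b=x^2+xy+ay^2$ for some $x,y\in F$, and then performs exactly the rearrangement you do: dividing by $y^2$ (your $t^2$) in the generic case, and handling the degenerate case where $b$ is a square via the identity $a=a^2+a+a^2$ (the paper's choice $\lambda=a$, $\mu=ax^{-1}$ is literally your ``rescaling the square''). Where you differ is in self-containedness: the paper cites Draxl both for the splitting criterion in (1) and for the symbol rules in (2), whereas you re-prove these ingredients --- the backward implication of (1) by exhibiting the explicit idempotent $w=u+\mu v+\lambda$ (your computation $w^2-w=a+\lambda^2+\lambda+\mu^2 b$ checks out against the relations $u^2=u+a$, $v^2=b$, $uv+vu=v$), the forward implication by passing through the reduced norm form $\llangle b,a]]$ and the isotropic-iff-hyperbolic property of Pfister forms, and (2) by bi-additivity of the symbol in the Brauer group, cancellation of the $2$-torsion class of $[a_1,b_2)_F$, and the fact that Brauer-equivalent algebras of equal degree are isomorphic. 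Your route buys independence from the precise statements on Draxl p.~104 at the cost of invoking quadratic-form machinery (norm forms, roundness, Witt cancellation) that the paper's terse proof avoids; both arguments are sound, and your idempotent computation is a clean elementary substitute for the citation in the backward direction of (1).
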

\begin{proof}
    $(1)$ By \cite[p.104]{Draxl83}, the quaternion algebra $[a,b)_F$ is split if and only if $b=x^2+xy+ay^2$ for some $x,y\in F$. 
    Hence, it suffices to show that this later equality is equivalent to the one in the statement.
    For such $x,y$ with $y\neq0$, we let $\lambda=xy^{-1}$ and $\mu=y^{-1}$. 
    For $x,y$ with $y=0$, we have $b=x^2$ and in particular $x\neq0$, and in this case, we let $\lambda=a$ and $\mu=ax^{-1}$. 
    Then, $a=\lambda^2+\lambda+\mu^2 b$.
    
    $(2)$ This follows by the rules for quaternion algebras given in \cite[p.104]{Draxl83}. 
\end{proof}

\section{The essential dimension of $\mathcal{F}_{2,4}$ in characteristic 2}
In this section, we compute the essential dimension of $\mathcal{F}_{2,4}$ when $\cchar k=2$.
This case is not discussed in \cite{CR15}.

For a field $F$ of characteristic $2$ and a separable quadratic extension $F(\wp^{-1}(a))$ of $F$ with $a\in F$, we denote by $\N_a$ the norm map $F(\wp^{-1}(a))\to F$.

\begin{lem}\label{linkage-rel-slots}
    Let $F$ be a field of characteristic $2$.
    Let $a_1,a_2,a_3\in F$ and $b_1,b_2,b_3\in F^{\times}$. Assume $\bigotimes_{i=1}^{3}[a_i,b_i)_F$ is split. 
    Then, there exist $\alpha_i\in F(\wp^{-1}(a_i))$ for $1\leq i\leq3$ such that 
    $$a_1+b_1\N_{a_1}(\alpha_1)\equiv a_2+b_2\N_{a_2}(\alpha_2)\equiv a_3+b_3\N_{a_3}(\alpha_3)\mod{\wp(F)}.$$
\end{lem}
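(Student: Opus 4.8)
The plan is to reinterpret the class of $a_i + b_i\N_{a_i}(\alpha_i)$ modulo $\wp(F)$ as a reduced norm inside the quaternion algebra $Q_i := [a_i,b_i)_F$, and thereby to reduce the lemma to the existence of a common separable quadratic subfield of $Q_1, Q_2, Q_3$.

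First I would fix presentations $Q_i = [a_i,b_i)_F$ with generators $u_i, v_i$ satisfying $u_i^2 - u_i = a_i$, $v_i^2 = b_i$ and $v_i u_i = (1-u_i)v_i$, so that $F(u_i) \simeq F(\wp^{-1}(a_i))$ and $v_i\beta = \bar\beta v_i$ for all $\beta \in F(u_i)$, where $\bar{\ }$ denotes the nontrivial $F$-automorphism of $F(u_i)$. Using $u_i^2 = u_i + a_i$, $v_i^2 = b_i$ and this twisting relation, a direct computation gives, for every $\alpha \in F(u_i)$,
\[
\Trd(u_i + \alpha v_i) = 1, \qquad \Nrd(u_i + \alpha v_i) = a_i + b_i\N_{a_i}(\alpha).
\]
Since a trace-$1$ element $w$ of $Q_i$ satisfies $\wp(w) = \Nrd(w) \in F$ and generates $F(\wp^{-1}(\Nrd(w)))$, and since every trace-$1$ element differs from one of the form $u_i + \alpha v_i$ by a scalar $s$, which changes the reduced norm only by $\wp(s)$, I would conclude that
\[
\{\, a_i + b_i\N_{a_i}(\alpha) \bmod \wp(F) : \alpha \in F(\wp^{-1}(a_i)) \,\}
\]
is exactly the set of classes $c \bmod \wp(F)$ for which $F(\wp^{-1}(c))$ embeds into $Q_i$. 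Thus the lemma is equivalent to the statement that $Q_1, Q_2, Q_3$ possess a common separable quadratic subfield.

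To produce such a common slot I would pass to $\Br(F)$. As $Q_1 \otimes_F Q_2 \otimes_F Q_3$ is split and each $[Q_i]$ is $2$-torsion, we have $[Q_3] = [Q_1 \otimes_F Q_2]$, so the biquaternion algebra $Q_1 \otimes_F Q_2$ has index at most $2$. By the linkage theorem for quaternion algebras in characteristic $2$, this forces $Q_1$ and $Q_2$ to be separably linked, i.e. $Q_1 \simeq [c,b_1')_F$ and $Q_2 \simeq [c,b_2')_F$ for a common first slot $c$ and suitable $b_1', b_2' \in F^\times$. Proposition \ref{known-properties}(2) then gives $[c,b_1')_F \otimes_F [c,b_2')_F \simeq [c, b_1'b_2')_F \otimes_F [0,b_2')_F$, and since $[0,b_2')_F$ is split by Proposition \ref{known-properties}(1), we obtain $Q_1 \otimes_F Q_2 \sim [c, b_1'b_2')_F$ in $\Br(F)$, whence $Q_3 \simeq [c, b_1'b_2')_F$. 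Hence $F(\wp^{-1}(c))$ embeds into all three algebras, and the first step produces $\alpha_i \in F(\wp^{-1}(a_i))$ with $a_i + b_i\N_{a_i}(\alpha_i) \equiv c \pmod{\wp(F)}$ for each $i$, as required.

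The main obstacle is the passage from an index bound to a \emph{common separable} slot for the pair $Q_1, Q_2$: this is exactly where the characteristic-$2$ subtleties of linkage (separable versus inseparable) are concentrated, and it is the one genuinely nontrivial input. By contrast, propagating the slot from $\{Q_1, Q_2\}$ to $Q_3$ is purely formal via Proposition \ref{known-properties}(2), and the identification of the norm expressions $a_i + b_i\N_{a_i}(\alpha_i)$ with embeddings of $F(\wp^{-1}(c))$ is the routine reduced-norm computation of the first step.
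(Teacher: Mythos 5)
Your proposal is correct, and its skeleton is the same as the paper's: reduce the congruences to the existence of a common left slot for $Q_1,Q_2,Q_3$, using the characterization of the possible left slots of $[a,b)_F$ as the reduced norms $a+\lambda^2+\lambda+b\N_a(\alpha)$ of reduced-trace-$1$ elements (your computation of $\Trd$ and $\Nrd$ and the scalar-shift argument is exactly the fact the paper states without proof). The one place you diverge is the source of the common slot: the paper simply cites Draxl's corollary that three quaternion algebras with split tensor product have a common left slot, whereas you rederive this from the pairwise statement --- index of $Q_1\otimes_F Q_2$ at most $2$ forces separable linkage (Albert's criterion plus Draxl's theorem that inseparable linkage implies separable linkage in characteristic $2$) --- and then propagate the slot $c$ to $Q_3$ via Proposition \ref{known-properties}(2) and the isomorphism of Brauer-equivalent quaternion algebras. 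Both routes rest on the same nontrivial characteristic-$2$ linkage input due to Draxl, so neither is more elementary; what your version buys is self-containedness (only the two-algebra linkage theorem is quoted as a black box, and the three-algebra statement is proved rather than cited), at the cost of a vaguer citation (``the linkage theorem'') and a couple of unaddressed but harmless edge cases (split $Q_i$, and the convention for $F(\wp^{-1}(c))$ when $c\in\wp(F)$).
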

\begin{proof}
    By \cite[\S 14, Cor. 1]{Draxl83}, the three quaternion algebras have a common left slot. 
    The possible left slots of a quaternion algebra $[a,b)_F$ are the reduced norms of elements whose reduced trace is $1$, namely the expressions $a+\lambda^2+\lambda+b \N_a(\alpha)$ for some $\lambda \in F$ and $\alpha \in F(\wp^{-1}(a))$. The statement readily follows.
\end{proof}

Let $Q$ be an $F$-quaternion algebra. For $a\in F$, $b\in F^{\times}$ with $Q\simeq[a,b)_F$, we have that $n_Q\simeq[1,a]\perp b[1,a]$. 

\begin{thm}
    When $\cchar k=2$,  we have that $\operatorname{ed}(\mathcal{F}_{2,4})\in\{4,5\}$.
\end{thm}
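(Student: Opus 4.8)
The lower bound $\operatorname{ed}(\mathcal{F}_{2,4})\geq 4$ is already available: it is the case $m=4$ of Proposition \ref{lowerbound-ed2m}, and the Remark following it shows that the hypothesis of algebraic closedness can be dropped by passing through $\operatorname{Spin}_9$. The whole content is therefore the upper bound $\operatorname{ed}(\mathcal{F}_{2,4})\leq 5$, which I would prove using the functor $\mathcal{G}_4$ in place of $\mathcal{F}_{2,4}$. So I fix a field $F\supseteq k$ with $\cchar F=2$ and a quadruple $S=(Q_1,Q_2,Q_3,Q_4)\in\mathcal{G}_4(F)$, and I look for a subfield $k\subseteq L\subseteq F$ with $\operatorname{trdeg}_kL\leq 5$, quaternion algebras $H_1,\dots,H_4$ over $L$ with $H_i\otimes_LF\simeq Q_i$, and $H_1\otimes_L\dots\otimes_LH_4$ split over $L$. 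The relation $[Q_1]+[Q_2]+[Q_3]+[Q_4]=0$ in $\Br(F)$ should force the four algebras to be presentable with many shared slots, so that far fewer than the naive eight parameters suffice.

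My first observation is that $\mathcal{G}_4$ is controlled by the triple $(Q_1,Q_2,Q_3)$ alone: the relation gives $[Q_4]=[Q_1]+[Q_2]+[Q_3]$, which has index at most $2$, so $Q_4$ is determined by the other three, and $\mathcal{G}_4$ is exactly the locus where $\ind(Q_1\otimes Q_2\otimes Q_3)\leq 2$. I would then distinguish cases according to $\ind(Q_1\otimes Q_2)$. If this index is $1$, then $Q_1\simeq Q_2$ and likewise $Q_3\simeq Q_4$, so writing $Q_1=[a_1,b_1)$ and $Q_3=[a_3,b_3)$ exhibits $S$ over $k(a_1,b_1,a_3,b_3)$ and $\operatorname{ed}(S)\leq 4$. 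If the index is $2$, then $Q_1$ and $Q_2$ are linked and hence have a common left slot $c$, say $Q_1=[c,b_1)$ and $Q_2=[c,b_2)$, so that $Q_1\otimes Q_2\sim[c,b_1b_2)$; now $[c,b_1b_2)\otimes Q_3\otimes Q_4$ is split, so the triple $(Q_3,Q_4,[c,b_1b_2))$ has split product and Lemma \ref{linkage-rel-slots} provides a common left slot $c'$, giving $Q_3=[c',b_3)$, $Q_4=[c',b_4)$ and---because in a split triple the third symbol is forced, exactly as in the proof of Theorem \ref{rel-ed(S)-hyperS}---an isomorphism $[c',b_3b_4)\simeq[c,b_1b_2)$. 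This isomorphism determines $b_3b_4$ modulo $\N_{c'}$, hence determines $Q_4$, so I expect $S$ to descend to $L=k(c,c',b_1,b_2,b_3)$ and $\operatorname{ed}(S)\leq 5$; the only delicate point is that the relevant linkage and splitting descend from $F$ to $L$.

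The essential difficulty is the case $\ind(Q_1\otimes Q_2)=4$, where $\beta:=Q_1\otimes Q_2=Q_3\otimes Q_4$ is a biquaternion division algebra; now neither $Q_1,Q_2$ nor $Q_3,Q_4$ share a slot, so the mechanism above breaks down. Here I would control $\beta$ and its two decompositions through its $6$-dimensional Albert form. Writing $Q_1=[a_1,b_1)$ and $Q_2=[a_2,b_2)$, the algebra $\beta$ is already defined over $L_0=k(a_1,b_1,a_2,b_2)$ with $\operatorname{trdeg}_kL_0\leq 4$, and the point is to show that recording the second decomposition $(Q_3,Q_4)$ costs at most one further transcendence degree. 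Decompositions $\beta\simeq Q_3\otimes Q_4$ correspond to splittings of the Albert form of $\beta$ into two ternary subforms, a datum that I expect to be specifiable over an extension of $L_0$ of transcendence degree at most $1$; after passing to the quadratic extension that splits $Q_3$, the residual condition becomes a norm condition that descends, just as in Lemma \ref{linkage-rel-slots}. Granting this, $S$ again descends to transcendence degree at most $5$.

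The step I expect to be genuinely hard is precisely this descent in the index-$4$ case. Unlike in the linked cases, the relation $[Q_3]+[Q_4]=[\beta]$ is not a single quadratic norm condition, and a naive presentation---such as the chained-slot presentation $Q_i=[a_i,a_{i+1})$ afforded by \cite[Theorem 2.1]{CQ24}, which already uses five parameters $a_1,\dots,a_5$---defines the four algebras over a field of transcendence degree $5$ but gives no control over whether their product splits there; this is the very phenomenon responsible for the weak bound $2^m+m$. I would therefore focus all the effort on showing that the decomposition datum of a biquaternion division algebra, together with a witness for its splitting, lives over a field of transcendence degree at most $5$. Finally, the exact value is left at $\{4,5\}$ rather than pinned down because the lower-bound techniques available here, resting on $\operatorname{Spin}_9$, do not reach $5$, while the construction just sketched does not improve the upper bound to $4$.
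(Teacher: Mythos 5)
Your lower bound is fine and coincides with the paper's. The upper bound, however, has a genuine gap, and you name it yourself: the case $\ind(Q_1\otimes_F Q_2)=4$ is never proved. Your claim that recording a second decomposition $\beta\simeq Q_3\otimes_F Q_4$ of a biquaternion division algebra ``costs at most one further transcendence degree'' is pure expectation: a splitting of the $6$-dimensional Albert form into two ternary subforms is a choice of subspace plus scalars, and nothing in your sketch bounds the parameters needed to record it, nor produces over the smaller field a witness that the product of the four algebras splits. Moreover, even your index-$2$ case is incomplete. With $L=k(c,c',b_1,b_2,b_3)$ you need a quaternion algebra $H_4$ over $L$ Brauer equivalent to $[c,b_1b_2)_L\otimes_L[c',b_3)_L$; but the index of a central simple algebra can only drop under scalar extension, so this biquaternion algebra may be division over $L$ even though its extension to $F$ has index $2$, and your $L$ contains nothing forcing it to be non-division. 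The ``delicate point'' you flag is therefore not a technicality --- it is the entire difficulty, in both cases.

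The paper's proof avoids both problems with two inputs you do not use. First, the Witt class of $n_{Q_1}\perp\ldots\perp n_{Q_4}$ is represented by a $10$-dimensional form in $I_q^3F$, which is isotropic by \cite[Theorem 4.10]{DM02}; combined with \cite[Lemma 3.1]{CM23} this yields a single relation $\lambda^2+\lambda+a_1+a_2+a_3+a_4+\sum_{i=1}^4 b_i\N_{a_i}(\alpha_i)=0$, valid with no hypothesis whatsoever on $\ind(Q_1\otimes_F Q_2)$. This allows re-presenting the algebras as $Q_i\simeq[c_i,d_i)_F$ with $c_1+c_2+c_3+c_4\equiv 0 \bmod \wp(F)$, hence reducing to a \emph{triple} $[c_i,d_id_4)_F$ with split product, to which Lemma \ref{linkage-rel-slots} (Draxl's common slot theorem) applies; the case distinction is then only on how many of these three auxiliary algebras split, not on the index of $Q_1\otimes_F Q_2$. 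Second, in the division cases the paper adjoins to $L$ an explicit element $x$ witnessing the splitting relation via Proposition \ref{known-properties}(1), so that all defining relations of the four algebras \emph{and} of the splitting already hold over $L$; it is precisely such a witness that your candidate fields lack. To salvage your route you would need to reproduce the isotropy input in the index-$4$ case --- without it, the descent you hope for has no known proof.
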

\begin{proof}
    By Proposition \ref{lowerbound-ed2m}, we have $\operatorname{ed}(\mathcal{F}_{2,4})\geq4$. Hence we are left to show that the essential dimension is bounded above by $5$.

    Let $F$ be a field containing $k$.
    Let $Q_1,\ldots, Q_4$ be $F$-quaternion algebras such that $\bigotimes_{i=1}^4Q_i$ is split, and let
    $S=(Q_1,\ldots,Q_4)$.
    We may assume without loss of generality that $Q_1,\ldots,Q_4$ are all division algebras.
    Let $a_1,a_2,a_3,a_4\in F$ and $b_1,b_2,b_3,b_4\in F^{\times}$ be such that $Q_i\simeq[a_i,b_i)_F$ for $1\leq i\leq4$.
    The Witt class of $n_{Q_1}\perp\ldots\perp n_{Q_4}$ is given by
    $$[1,a_1+a_2+a_3+a_4] \perp b_1[1,a_1]\perp b_2[1,a_2]\perp b_3[1,a_3]\perp b_4[1,a_4]$$
    which then lies in $I_q^3F$, as $\bigotimes_{i=1}^4Q_i$ is split. 
    Note that the dimension of this latter form is $10$, hence by \cite[Theorem 4.10]{DM02} it is isotropic.
    By \cite[Lemma 3.1]{CM23}, there exist some $\lambda\in F$ and $\alpha_i\in F(\wp^{-1}(a_i))$ for $1\leq i\leq4$ such that 
    $$\lambda^2+\lambda+a_1+a_2+a_3+a_4+\sum_{i=1}^4 b_i\N_{a_i}(\alpha_i)=0.$$ 
    Set $d_i=b_i\N_{a_i}(\alpha_i)$ and $c_i=a_i+d_i$ when $\alpha_i\neq 0$ and $d_i=b_i$ and $c_i=a_i$ when $\alpha_i=0$.
    Note that $$c_1+c_2+c_3+c_4 \equiv 0 \mod{\wp{(F)}}.$$
    Hence $Q_i\simeq[c_i,d_i)_F$ for $1\leq i\leq3$ and $Q_4\simeq[c_1+c_2+c_3,d_4)_F$.
    Applying Proposition \ref{known-properties} (2) to the pairs $(Q_1,Q_4),(Q_2,Q_4),(Q_3,Q_4)$, we obtain that the $F$-algebra $\bigotimes_{i=1}^3[c_i,d_id_4)_F$ is split. 
    We continue the proof by treating possible cases according to whether these three quaternion algebras are division or not. 
    There are three cases; all three being split, all three being division, and only one of them being split.

    Consider first the case where each of the quaternion algebras $[c_i,d_id_4)_F$ is split. It follows that $Q_i\simeq[c_i,d_4)_F$ for $1\leq i\leq3$ and $Q_4\simeq[c_1+c_2+c_3,d_4)_F$. 
    Let $L=k(c_1,c_2,c_3,d_4)$. Then $L/k$ is of transcendence degree at most $4$ and we have that $\bigotimes_{i=1}^3[c_i,d_4)_L\otimes_L[c_1+c_2+c_3,d_4)_L\sim[0,d_4)_L$, which is split.

    Assume now that each of the quaternion algebra $[c_i,d_id_4)_F$ is division.
    By Lemma \ref{linkage-rel-slots}, we obtain that
    $$c_1+d_1d_4\N_{c_1}(\beta_1) \equiv c_2+d_2d_4\N_{c_2}(\beta_2) \equiv c_3+d_3d_4\N_{c_3}(\beta_3) \mod{\wp(F)}$$
    for some $\beta_i\in F(\wp^{-1}(c_i))$ with $1\leq i\leq3$.
    Let $f_4=d_4$, and for $1\leq i\leq3$, let $f_i=d_i\N_{c_i}(\beta_i)$, $\delta_i=1$ if $\beta_i\neq0$ and $f_i=d_i$, $\delta_i=0$ if $\beta_i=0$. Then we have that
    $$c_1+\delta_1f_1f_4\equiv c_2+\delta_2f_2f_4\equiv c_3+\delta_3f_3f_4 \mod{\wp(F)}$$
    and that $[c_i,d_id_4)_F\simeq[c_i+\delta_if_if_4,f_if_4)_F$ for $1\leq i\leq3$. 
    It follows that the quaternion algebra $[c_1+\delta_1f_1f_4,f_1f_2f_3f_4)_F$ is split.
    By Proposition \ref{known-properties}, there exist some $x\in F$ such that $$c_1+\delta_1f_1f_4\equiv x^2f_1f_2f_3f_4\mod{\wp(F)}.$$ 
    Note that $x\neq0$, since $[c_1,d_1d_4)_F$ is a division algebra.
    Now we can find some $u_1,u_2,u_3\in F$ such that 
    $$c_1+u_1^2+u_1+\delta_1f_1f_4=c_2+u_2^2+u_2+\delta_2f_2f_4=c_3+u_3^2+u_3+\delta_3f_3f_4=x^2f_1f_2f_3f_4.$$
    We set $e_i=c_i+u_i^2+u_i$ for $1\leq i\leq3$. Then, we have $Q_i\simeq[e_i,f_i)_F$ for $1\leq i\leq3$ and $Q_4\simeq[e_1+e_2+e_3,f_4)_F$.  
    Let $L=k(e_1,f_1,f_2,f_3,f_4)(x)$. Then $\operatorname{trdeg}_kL\leq5$.
    Furthermore, using Proposition \ref{known-properties}, one can easily check that the $L$-algebra $\bigotimes_{i=1}^3[e_i,f_i)_L\otimes_L[e_1+e_2+e_3,f_4)_L$ is split. 

    Finally assume that only one of quaternion algebras $[c_i,d_id_4)_F$, say $[c_1,d_1d_4)_F$, is split.
    Then $Q_1\simeq[c_1,d_4)_F$ and the $F$-algebra $[c_2,d_2d_4)_F\otimes_F[c_3,d_3d_4)_F$ is split.
    Arguing as in the proof of Lemma \ref{linkage-rel-slots}, we can find some $\beta_2\in F(\wp^{-1}(c_2))$ and $\beta_3\in F(\wp^{-1}(c_3))$ such that
    $$c_2+d_2d_4\N_{c_2}(\beta_2)\equiv c_3+d_3d_4\N_{c_3}(\beta_3)\mod{\wp(F)}.$$
    Let $f_1=d_1, f_4=d_4$, and let $f_i=d_i\N_{c_i}(\beta_i)$, $\delta_i=1$ if $\beta_i\neq0$ and $f_i=d_i$, $\delta_i=0$ if $\beta_i=0$, for $2\leq i\leq3$. 
    Then the $F$-quaternion algebra $[c_2+\delta_2f_2f_4,f_2f_3)_F$ is split.
    By Proposition \ref{known-properties} there exists some $x\in F$ such that $$c_2+\delta_2f_2f_4\equiv x^2f_2f_3\mod{\wp(F)}.$$ 
    Note that $x\neq0$, since $[c_2,d_2d_4)_F$ is a division algebra.
    Now we can find some $u_2,u_3\in F$ such that 
    $$c_2+u_2^2+u_2+\delta_2f_2f_4=c_3+u_3^2+u_3+\delta_3f_3f_4=x^2f_2f_3.$$
    We set $e_1=c_1$, $e_i=c_i+u_i^2+u_i$ for $2\leq i\leq3$. Then we have $Q_1\simeq[e_1,f_4)_F$, $Q_i\simeq[e_i,f_i)_F$ for $2\leq i\leq3$ and $Q_4\simeq[e_1+e_2+e_3,f_4)_F$. 
    Let $L=k(e_1,e_2,f_2,f_3,f_4)(x)$.
    Then $\operatorname{trdeg}_kL\leq5$.
    Furthermore, using Proposition \ref{known-properties}, one can easily check the $L$-algebra $\bigotimes_{i=2}^3[e_i,f_i)_L\otimes[e_1,f_4)_L\otimes_L[e_1+e_2+e_3,f_4)_L$ is split. 

    We conclude that $\operatorname{ed}(S)\leq5$. 
    This shows that $\operatorname{ed}(\mathcal{F}_{2,4})\leq5$ and completes the proof.
\end{proof}

\begin{exmp}\label{linkage-S_3}
    Let $F$ be a field containing $k$ of $\cchar{k}=2$ and let $S\in\mathcal{G}_{3}(F)$. Then $S$ is a separably linked sequence.
    It follows by \cite[Example 5.4]{CGV17} that the form $\Sigma_S$ is hyperbolic if and only if the sequence $S$ is inseparably linked.
\end{exmp}

\begin{exmp}
    Let $F$ be a field containing $k$ of $\cchar{k}=2$ and let $S=(Q_1,Q_2,Q_3,Q_4)\in\mathcal{G}_{4}(F)$ be a separably linked sequence.
    Let $H_i$ be the $F$-quaternion algebra Brauer equivalent to $Q_1\otimes_FQ_{i+1}$ for $1\leq i\leq3$.
    Then $\Sigma_S$ is hyperbolic if and only if $(H_1,H_2,H_3)$ is inseparably linked.

    Indeed, let $a\in F$, $b_1,b_2,b_3\in F^{\times}$ be such that $Q_i\simeq[a,b_i)_F$ for $1\leq i\leq 3$ and $Q_4\simeq[a,b_1b_2b_3)_F$.
    Then the Witt class of $\Sigma_S$ is given by $b_1 \llangle b_1b_2,b_1b_3,a]]$ and $H_i\simeq[a,b_1b_{i+1})_F$ for $1\leq i\leq3$.
    Let $S'=(H_1,H_2,H_3)$. Then $S'$ is separably linked and the Witt class of $\Sigma_{S'}$ is given by $\llangle b_1b_2,b_1b_3,a]]$. 
    Since $\Sigma_S=b_1\Sigma_{S'}$,
    it follows by Example \ref{linkage-S_3} that $S'$ is inseparably linked if and only if $\Sigma_S$ is hyperbolic.
\end{exmp}

\subsection*{Acknowledgement}
This work was supported by the project \emph{IEA of CNRS} between the Artois University and the Academic College of Tel-Aviv-Yaffo, by the 2020 PRIN (project \emph{Derived and underived algebraic stacks and applications}) from MIUR, and by research funds from Scuola Normale Superiore.

\end{document}